\newtheorem{thm}{Theorem}[section]
\newtheorem{lemma}[thm]{Lemma}
\newtheorem{rem}[thm]{Remark}
\newcommand{\R}{{\mathbb{R}}}
\newcommand{\Z}{{\mathbb{Z}}}
\newcommand{\C}{{\mathbb{C}}}
\newcommand{\cK}{{\mathcal{K}}}
\newcommand{\cF}{{\mathcal{F}}}
\newcommand{\cL}{{\mathcal{L}}}
\newcommand{\p}{\partial}
\newcommand{\wt}{\widetilde}
\begin{document}

\title{Symplectic quasi-states on the quadric surface and Lagrangian submanifolds\\
}

\renewcommand{\thefootnote}{\alph{footnote}}

\author{\textsc Yakov Eliashberg
and \ Leonid
Polterovich
}


\date{June 12, 2010}

\maketitle

\begin{abstract} The quantum homology of the monotone complex
quadric surface splits into the sum of two fields. We outline a
proof of the following statement: The unities of these fields give
rise to distinct symplectic quasi-states defined by asymptotic
spectral invariants. In fact, these quasi-states turn out to be
``supported" on disjoint Lagrangian submanifolds. Our method
involves a spectral sequence which starts at homology of the loop
space of the 2-sphere and whose higher differentials are computed
via symplectic field theory, in particular with the help of the
Bourgeois-Oancea exact sequence.
\end{abstract}

\renewcommand{\thefootnote}{\arabic{footnote}}

\section{Introduction and main result}
\label{sec-intro}

The quantum homology of the monotone complex
quadric surface $S^2 \times S^2$ splits into the sum of two fields. The
unities of these fields give rise to symplectic
quasi-states defined by asymptotic spectral invariants (see \cite{EP-qst}).
One of these quasi-states is ``supported" on a Lagrangian sphere, the anti-diagonal \cite{EP-rigid}.
Our main finding (see Theorem \ref{thm-main} below) is that the second quasi-state
is ``supported" on the exotic monotone Lagrangian torus described in \cite{EP-rigid}  which is disjoint from the anti-diagonal. Thus these quasi-states are distinct, and the exotic torus
has strong symplectic rigidity properties. Let us pass to precise formulations.

Let $(W,\omega)$ be the standard symplectic quadric surface $S^2
\times S^2$, where both $S^2$ factors have equal areas $1$. Consider
the field $\cK = \C[[t^{-1},t]$ of Laurent series and the quantum
homology algebra $QH(W,\omega)= H_*(W,\C) \otimes \cK$ which is
graded by
$$\text{deg}(at^N) = \text{deg}(a) + 4N\;.$$ The graded component
$QH_4$ is an algebra over $\C$ with respect to the quantum product.
It splits into the sum of two fields generated by idempotents
$e_{\pm}= (1 \pm Pt)/2$, where $1=[W]$ is the fundamental class and
$P$ stands for the class of the point. Define the functionals
$\zeta_\pm : C^{\infty}(W) \to \R$ by
$$\zeta_{\pm}(H) = \lim_{E \to \infty} c(e_\pm,EH)/E\;,$$
where $c(\cdot,\cdot)$ is a spectral invariant
\cite{Oh-spectral,MS}.

It was shown in \cite{EP-qst} that these functionals are {\it
symplectic quasi-states}, that is they are monotone, linear on all
Poisson-commutative subspaces and normalized by $\zeta_\pm(1)=1$.
 Recall that a {\it quasi-measure} associated
to a quasi-state $\zeta$ is a set-function whose value on a closed
subset $X$  equals, roughly speaking, $\zeta(\chi_X)$ where $\chi_X$
is the indicator function of $X$. We write $\tau_\pm$ for the
quasi-measures associated to $\zeta_\pm$. The reader is referred to
\cite{EP-qmm,EP-qst} for further preliminaries.

We view $W$ as the symplectic cut \cite{Lerman} of the unit
cotangent bundle $\{|p| \leq 1\} \subset T^*S^2$, where the zero
section is identified with the Lagrangian anti-diagonal $L \subset
W$, and the length $|p|$ is understood with respect to the standard
spherical metric so that the length of the equator equals $1$. The
level $\{|p|=1/2\}$ contains unique (up to a Hamiltonian isotopy)
monotone Lagrangian torus denoted by $K$. Write $T \subset W$ for
the Lagrangian Clifford torus (the product of equators).

It has been proved in \cite[Example 1.20]{EP-rigid} that
$\tau_- (L)=1$ and $\tau_-(K)=0$. Together with the equality $\tau_-(T)=\tau_+(T)=1$
(see \cite{EP-qst}) this yields that $K$ and $T$ are not Hamiltonian isotopic.

\medskip
\noindent
\begin{thm}\label{thm-main} $\tau_+(L)=0, \tau_+(K) = 1$. In
particular, $\zeta_- \neq \zeta_+$.
\end{thm}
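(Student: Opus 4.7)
The strategy is to reduce both equalities to the single statement that the torus $K$ is $\zeta_+$-\emph{superheavy}, i.e.\ $\zeta_+(H) \leq \sup_K H$ for every $H \in C^{\infty}(W)$. Superheaviness immediately gives $\tau_+(K) = 1$. For $\tau_+(L) = 0$ I would exploit $L \cap K = \emptyset$: pick $H \in C^{\infty}(W)$ with $0 \leq H \leq 1$, $H \equiv 1$ on a neighborhood of $L$, and $H \equiv 0$ on a neighborhood of $K$. Superheaviness forces $\zeta_+(H) \leq \sup_K H = 0$, while monotonicity forces $\zeta_+(H) \geq 0$. Approximating $\chi_L$ by such Hamiltonians in the definition of $\tau_+$ then gives $\tau_+(L) = 0$.

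To prove that $K$ is $\zeta_+$-superheavy I would use the Floer-theoretic criterion from \cite{EP-rigid}: a monotone Lagrangian $K \subset W$ is $\zeta_e$-superheavy whenever the self-Floer homology $HF(K,K)$ is nontrivial and the idempotent $e \in QH_4(W)$ acts as the identity on a summand of $HF(K,K)$ under the $QH(W)$-module structure coming from the open-closed / quantum-module map. Thus the task reduces to computing $HF(K,K)$ together with its $QH(W)$-action and verifying that $e_+$ acts as identity while $e_-$ acts as zero on a nonzero summand. The symmetric statement for $L$, with the roles of $e_\pm$ interchanged, has already been carried out in \cite[Example 1.20]{EP-rigid}, so the present task is genuinely a new computation about $K$.

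The computation of $HF(K,K)$ would be organized as follows. I would exploit the fact that $W$ is the symplectic cut of the disc cotangent bundle $\{|p|\leq 1\}\subset T^*S^2$ along its contact boundary $ST^*S^2$, with $K$ sitting on the interior level $\{|p|=1/2\}$. I would build a spectral sequence whose $E^1$-page is the Morse--Floer complex of the free loop space $\Lambda S^2$ --- reflecting Viterbo's identification of symplectic homology of $T^*S^2$ with $H_*(\Lambda S^2)$ --- and whose higher differentials count holomorphic buildings in the SFT sense, broken along $ST^*S^2$. These differentials are organized by the Bourgeois--Oancea long exact sequence, which interweaves the symplectic homology of $\{|p|\leq 1\}$, the cylindrical contact homology of $ST^*S^2$, and the contribution of the point class produced by the cut. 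At the end of the computation $HF(K,K)$ should have rank $2$, with the $QH(W)$-action decomposing it into the $e_+$- and $e_-$-summands.

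The principal obstacle is the explicit SFT enumeration of these higher differentials: one must compute Conley--Zehnder indices and coherent orientation signs for holomorphic buildings in the symplectization of $ST^*S^2$, ensure transversality at the (possibly multiply covered) Reeb orbits on the round cosphere bundle, and carry out the cancellations dictated by Bourgeois--Oancea. A secondary difficulty is confirming that the algebra structure on $HF(K,K)$ diagonalizes the quantum action with eigenvalues matching $e_+$ and $e_-$ rather than some other pair of orthogonal idempotents of $QH_4(W)$. The monotonicity of $K$ and its minimal Maslov number $2$, both established in \cite{EP-rigid}, keep disk bubbling under control and should prevent the spectral sequence from collapsing in an unhelpful way.
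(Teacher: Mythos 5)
Your reduction of both equalities to the single claim that $K$ is $\zeta_+$-superheavy is sound (indeed the paper notes, conversely, that $\tau_+(K)=1$ is what yields $e_+$-superheaviness of $K$), but the route you propose for establishing superheaviness contains a genuine gap. You want to apply the criterion of \cite{EP-rigid} to $HF(K,K)$ with plain $\C$-coefficients and you expect a rank-two group on which $e_\pm$ act by splitting it. For the exotic monotone torus this is precisely what fails: with the trivial local system the self-Floer homology of $K$ is expected to vanish (the trivial holonomy is not a critical point of the superpotential of this torus), which is exactly why \cite{FO3} must work with nontrivial local systems and bulk deformations. As stated, your criterion would have no nonzero summand to act on; to repair it you would need Floer homology of $K$ twisted by local systems (or bounding cochains), a correspondingly refined closed-open map, and an identification of the resulting eigenvalue with $e_+$ rather than $e_-$ --- none of which your outline supplies. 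Moreover, the computational device you invoke --- a spectral sequence starting at $H_*(\cL S^2)$ with higher differentials organized by the Bourgeois--Oancea sequence \cite{BO} --- is not a tool for computing $HF(K,K)$: loop-space homology of $S^2$ appears because it is the symplectic homology of the cotangent disc bundle around the anti-diagonal $L$, and there is no mechanism in your sketch by which such a sequence would converge to the self-Floer homology of a torus sitting on the level $\{|p|=1/2\}$ (there one expects $H_*(T^2)$-type generators, corrected by Maslov-two disc counts, not loop-space homology of the sphere).

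For comparison, the paper's argument runs in the opposite direction and never touches $HF(K,K)$. The Bourgeois--Oancea-type spectral sequence is used to compute the filtered Hamiltonian Floer homology $HF_2^{(1;E+1)}(H_E)$ of a profile Hamiltonian concentrated on the tube $\{|p|<1/2\}$ around $L$ (Lemma \ref{lem-1}); an exact-triangle argument then bounds $c(e_+,H_E)$ by $2$ for all $E$, giving $\tau_+(\{|p|<1/2\})=0$ as in \eqref{eq-tube} and hence $\tau_+(L)=0$ directly. The equality $\tau_+(K)=1$ is then obtained by elimination: $\tau_+$ is pushed forward under the semitoric map $\Phi=(F,G)$ of \eqref{eq-Phi}, every fiber other than $K$ in the region $\{|p|\ge 1/2\}$ is shown to be displaceable, and the resulting measure is forced to be a Dirac mass at $(0,-1/2)$. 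Superheaviness of $K$ is a corollary of the theorem, not an ingredient. Pursuing your route is essentially redoing \cite{FO3} (with local systems), a program the paper explicitly contrasts with its own method, and in its present form the central computation on which your proposal rests is not viable.
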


\medskip
\noindent Equality $\tau_+(K) =1$ yields that $K$ is
$e_+$-superheavy in the terminology of \cite{EP-rigid}. In
particular, it is non-displaceable and intersects every image of the
Clifford torus under a symplectomorphism.

The rest of the note contains a detailed outline of the proof of
Theorem \ref{thm-main}. Our method involves a spectral sequence
which starts at homology of the loop space of the 2-sphere and whose
higher differentials are computed via symplectic field theory. The
main technical ingredient comes from a paper by Bourgeois and Oancea
\cite{BO}.

Non-displaceability of the exotic torus $K$ has been recently
established via Lagrangian Floer homology by several independent
groups of researchers:  Fukaya, Oh, Ohta and Ono \cite{FO3},
Chekanov and Schlenk \cite{CH} and Wehrheim (unpublished). The paper
\cite{CH} presents new constructions of exotic Lagrangian tori in
the product of spheres. A related construction of exotic tori is
given by Biran and Cornea in \cite{BC} in the context of their study
of narrow Lagrangian submanifolds. It would be interesting to
understand whether these tori can be distinguished by appropriate
symplectic quasi-measures.

The paper \cite{FO3} contains a more general version of Theorem
\ref{thm-main}. According to \cite{FO3},  the exotic torus $K$ lies
in an infinite family of non-displaceable Lagrangian tori whose
Liouville class varies. Though our approach is quite different from
the one in \cite{FO3}, there are some similarities which deserve
further exploration. Let us mention also that the tori of the
above-mentioned family appear as invariant sets of a semitoric
integrable system (see \cite{PV} and Section \ref{sec-reduction}
below). In dimension four, semitoric means that one of the integrals
generates a Hamiltonian circle action. The study of this class of
integrable systems was initiated recently in \cite{PV}. Semitoric
systems have some amusing properties and appear in meaningful
physical models. It would be interesting to detect ``symplectically
rigid" invariant Lagrangian tori in other examples of semitoric
systems.

\section{Reduction to a Floer-homological
calculation}\label{sec-reduction}

We work with Floer homology with $\C$-coefficients. In our
conventions on the Conley-Zehnder indices, the PSS-isomorphism
identifies $FH_k$ with $QH_{k+2}$ (see \cite{MS} for preliminaries).

Throughout that paper, we denote by $\Sigma$ the diagonal in $W=S^2
\times S^2$. In our picture, $\Sigma$ is obtained from the
hypersurface $\{|p|=1\}$ by the symplectic cut \cite{Lerman}.

Fix $r \in (0;1/2)$, $E>0$ large enough, and $\epsilon >0$ small
enough. We assume that the data is ''non-resonant": $1/r \notin \Z$
and $(E+\epsilon)/\epsilon \notin \Z$.

Consider a Hamiltonian $H_E(|p|)$ on $W$ given by a piece-wise
linear function which equals $E$ on $U:= \{|p| \leq r-\epsilon\}$
and equals $-\epsilon$ on  $V:= \{|p|\geq r\}$.
We
refer e.g. to \cite{EKP} for a discussion on Floer-homological
calculations with piece-wise linear Hamiltonians.

Orbits of period $1$ of $H_E$ form several critical submanifolds,
which we are going to describe now. Each of these submanifolds is
equipped with a Morse function which is used for a small
perturbation of the action functional associated to $H_E$. In
addition, we fix capping discs for orbits from these submanifolds.
Critical points of these Morse functions together with the capping
discs give rise to generators of the Floer complex. Let us pass to
the precise description of this data.

\medskip
\noindent {\sc The maximum set $U$:} Here we have constant orbits
capped with the constant discs. We choose an exhausting Morse
function $f_U$ on $U$ with two critical points: a saddle point $x_0$
of Morse index $2$ and a maximum point $x_2$. Their Conley-Zehnder
indices are $0$ and $2$ respectively, and their actions equal $E$.
We refer to the elements of the Floer complex of the form $\gamma
t^{-N}$ for $\gamma \in \{x_0,x_2\}$ as {\bf $U$-generators}, and
call the number $N$ {\it the $t$-degree}.

\medskip
\noindent{\sc The minimum set $V$:} Here we have constant orbits
capped by the constant discs. We choose an exhausting Morse function
$f_V$ on $V$ with two critical points: a saddle point $y_0$ of Morse
index $2$ and a minimum point $y_{-2}$. Their Conley-Zehnder indices
are $0$ and $-2$ respectively, and their actions equal $-\epsilon$.
We refer to the elements of the Floer complex of the form $\gamma
t^{-N}$ for $\gamma \in \{y_0,y_{-2}\}$ as {\bf $V$-generators}.

\medskip
\noindent {\sc Non-constant orbits:} They form two series of
submanifolds diffeomorphic to $\R P^3$. We denote these submanifolds
by $Z^{\pm}_k$. Here the lower index $k$ corresponds to
$k$-times-covered simple closed geodesics on $L$, $Z^+$ stands for
the orbits on the submanifold $\{|p|= r-\epsilon\}$ and $Z^-$ stands
for the orbits on the submanifold $\{|p|= r\}$. Note that the
multiplicity $k$ satisfies inequalities $k \geq 1$ and
\begin{equation}\label{eq-k-bound}
k \leq \frac{E+\epsilon}{\epsilon}\;.
\end{equation}
In the discussion below we assume that $k \geq 1$ is arbitrary, and
that inequality \eqref{eq-k-bound} is an extra restriction which
selects orbits relevant for the Floer complex corresponding to the
fixed value of $\epsilon>0$. Next, we fix a Morse function
$f^{\pm}_k$ on $Z^{\pm}_k$ with critical points $\check{m}^{\pm}_k$,
$\hat{m}^{\pm}_k$, $\check{M}^{\pm}_k$ and $\hat{M}^{\pm}_k$ of
Morse indices $0,1,2,3$ respectively. It will be convenient for us
to choose $f_k$ in such a way that the orbits in each pair of orbits
$(\hat m_k^\pm,\check m_k^\pm)$ and $(\hat M_k^\pm,\check M_k^\pm)$
represent the same unparameterized orbit. The orbits from
$Z^{\pm}_k$ are capped by discs lying in $W \setminus \Sigma$. We
refer to the elements of the Floer complex of the form $\gamma
t^{-N}$ for $\gamma \in
\check{m}^{+}_k,\hat{m}^{+}_k,\check{M}^{+}_k, \hat{M}^{+}_k$ as
{\bf upper generators} and for $\gamma \in
\check{m}^{-}_k,\hat{m}^{-}_k,\check{M}^{-}_k, \hat{M}^{-}_k$ as
{\bf lower generators}, and call the number $N$ {\it the
$t$-degree}.

The Conley-Zehnder indices of the generators corresponding to the
non-constant orbits are given in Table 1 below.

\begin{table}[h]\label{tab:hresult}
\caption{Indices of upper/lower generators} \centering
\begin{tabular}{c rrrr}
\hline \hline & & & &\\
& $\check{m}_k$ & $\hat{m}_k$ & $\check{M}_k$ & $\hat{M}_k$  \\
upper &  $2k-1$ & $2k$ & $2k+1$ & $2k+2$ \\
lower & $2k-2$  & $2k-1$ & $2k$ & $2k+1$  \\
\hline 
\end{tabular}
\end{table}

The actions of the lower generators equal $-\epsilon + kr$ and of
the upper generators $E+k(r-\epsilon)$.

\medskip\noindent In what follows we write $CZ(\gamma)$ for the Conley-Zehnder index of a
capped orbit $\gamma$ and $A(\gamma)$ for its action. We have
$$CZ(\gamma t^{-N}) = CZ(\gamma) -4N, \; A(\gamma t^{-N}) = A(\gamma) -N\;.$$

A direct calculation (crucially based on the fact that $r < 1/2$)
yields the following lemma which will be used throughout the paper.

\begin{lemma}\label{lem-0}
\begin{itemize}
\item[{(i)}] All lower generators and $V$-generators of the Conley-Zehnder indices $1,2,3$ have action $< 1$.
\item[{(ii)}]All upper generators and $U$-generators of the Conley-Zehnder
indices $1,2,3$ have action $< E+1$ and non-negative $t$-degree.
\item[{(iii}] Let $\gamma$ be an upper generator of
the Conley-Zehnder index $1,2,3$ and action $A(\gamma) > 1$. Then
inequality \eqref{eq-k-bound} holds automatically: $k <
\frac{E+\epsilon}{\epsilon}$.
\item[{(iv)}] There exist numbers $\mu_-(E) < \mu_+(E)$,
$\mu_\pm(E) \to \infty$ as $E \to \infty$ with the following
property: Let $\gamma$ be an upper generator or a $U$-generator of
the Conley-Zehnder index $1,2,3$ and $t$-degree $N$. Then $A(\gamma)
>1$ for  $N \leq \mu_-$ and $A(\gamma) < 1$ for $N > \mu_+ (E)$.
\end{itemize}
\end{lemma}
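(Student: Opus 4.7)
The plan is a short calculation driven by the shift rules $CZ(\gamma t^{-N}) = CZ(\gamma) - 4N$, $A(\gamma t^{-N}) = A(\gamma) - N$, Table~1, and the single inequality $r<1/2$. First I would convert the Conley--Zehnder constraint $CZ(\gamma t^{-N})\in\{1,2,3\}$ into one on the pair $(k,N)$: a parity check against the raw CZ values in Table~1 shows that in both the upper and the lower case
$$k \in \{2N,\, 2N+1,\, 2N+2\}.$$
Combined with $k\ge 1$ this forces $N\ge 0$ in the upper case. For the $U$- and $V$-generators the only admissible instances are $x_2$ at $N=0$ (action $E$) and $y_{-2}$ at $N=-1$ (action $1-\epsilon$).

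Writing $k = 2N + c$ with $c\in\{0,1,2\}$, the action formulas become
$$A_{\text{lower}} = -\epsilon + cr + N(2r-1), \qquad A_{\text{upper}} = E + c(r-\epsilon) + N(2r-2\epsilon-1),$$
each linear in $N$ with negative slope (using $r<1/2$). Evaluating at the maximum yields $A_{\text{lower}} \le -\epsilon + 2r < 1$ and $A_{\text{upper}} \le E + 2(r-\epsilon) < E+1$, which, together with the $U$- and $V$-contributions above, handles (i) and (ii); the $t$-degree non-negativity in (ii) was recorded in the previous step. For (iii), the constraint $k\le 2N+2$ is equivalent to $-N \le 1 - k/2$, so the upper formula yields
$$A(\gamma) \le E + 1 + k\!\left(r - \epsilon - \tfrac12\right) < E + 1 - k\epsilon,$$
where the last step again uses $r<1/2$; hence $A(\gamma)>1$ forces $k\epsilon < E$, a fortiori $k < (E+\epsilon)/\epsilon$.

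For (iv), setting $\lambda := 1 + 2\epsilon - 2r > 0$, the upper action lies in the interval $[E - N\lambda,\, E + 2(r-\epsilon) - N\lambda]$, so
$$\mu_-(E) := \frac{E - 2}{\lambda}, \qquad \mu_+(E) := \frac{E + 2(r-\epsilon)}{\lambda}$$
both tend to $\infty$ with $E$ and deliver the required thresholds; the $U$-contribution $x_2$ at $N=0$ is absorbed by $\mu_-$ for large $E$. The only real risk in the argument is careful bookkeeping of the eight generator types through the $(k,N)$ dictionary, together with a verification that the non-resonance hypotheses $1/r,\,(E+\epsilon)/\epsilon \notin \Z$ are exactly what keeps every inequality strict by excluding the boundary equalities $A = 1$ and $A = E+1$.
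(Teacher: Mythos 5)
Your proposal is correct and takes essentially the same route as the paper's proof: the same Conley--Zehnder/action shift bookkeeping (your dictionary $k=2N+c$, $c\in\{0,1,2\}$ is just a sharper form of the paper's $0\le 2k-4N\le 4$), the same action formulas, and the same constant ($\lambda$ equals the paper's $\kappa=1-2r+2\epsilon$) for the thresholds $\mu_\pm(E)$. Two cosmetic points: note that $k\ge 1$ forces $N\ge 0$ for lower generators as well, which is what justifies evaluating $A_{\text{lower}}$ at $N=0$; and the strictness of the inequalities comes from $r<1/2$ and $\epsilon>0$, not from the non-resonance hypotheses, which play no role in this lemma.
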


\begin{proof}

\medskip\noindent{\sc Lower generators:}
Take $\gamma_k \in
\{\check{m}_k^-,\hat{m}_k^-,\check{M}_k^-,\hat{M}_k^-\}$ and put
$\gamma = \gamma_k t^{-N}$. We have that $$CZ(\gamma) = 2k+j-4N \in
[1;3],\;\;j= -2,-1,0,1\;.$$ Thus $0 \leq 2k-4N \leq 4$. Note that
$$A(\gamma) = kr-\epsilon - N\;.$$ Since $r < 1/2$ we have that
$$A(\gamma) < (k/2-N) -\epsilon < 1\;.$$

\medskip\noindent{\sc Upper generators:}
Take $\gamma_k \in
\{\check{m}_k^+,\hat{m}_k^+,\check{M}_k^+,\hat{M}_k^+\}$ and put
$\gamma = \gamma_k t^{-N}$. We have that $$CZ(\gamma) = 2k+j-4N \in
[1;3],\;\;j= -1,0,1,2\;.$$ Thus
\begin{equation} \label{eq-kN}
0 \leq 2k-4N \leq 4\;.
\end{equation}
 In particular, $N \geq 0$ since $k \geq 1$.
Note that
\begin{equation}\label{eq-action}
A(\gamma) = E + kr-k\epsilon -N\;.
\end{equation}
Since $r < 1/2$ we have that
$$A(\gamma) < E + (k/2-N) -k\epsilon \leq E+1\;.$$

Put $\kappa =1-2r+2\epsilon$. Observe that by  \eqref{eq-kN} and
\eqref{eq-action} $A(\gamma) >1$ yields $k < 2E/\kappa <
(E+\epsilon)/\epsilon$ which proves statement (iii) of the lemma.

Further, pick $$\mu_-(E) <(E-1)/\kappa\;, \mu_+(E) >
(E-\kappa)/\kappa\;.$$ Statement (iv) of the lemma readily follows
from  \eqref{eq-kN} and \eqref{eq-action}.

\medskip
\noindent Finally, the only $U$-generator of the index $1,2,3$ is
$x_2$, and its action equals $E$ and its $t$-degree equals $0$. The
only $V$-generator of index  $2$ is $y_{-2}t$ and its action equals
$1-\epsilon$. This completes the proof of statements (i) and (ii) of
the lemma.
\end{proof}

\medskip \noindent \begin{lemma} [Main lemma] \label{lem-1}
$HF_2^{(1;E+1)}(H_E)=\C$.
\end{lemma}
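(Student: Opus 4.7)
The plan is to identify the subcomplex of $CF_*(H_E)$ in CZ degrees $1,2,3$ with action in $(1, E+1)$ and compute its homology via a two-stage spectral sequence: first a Morse--Bott reduction inside each non-constant orbit family $Z_k^+$, then the SFT higher differentials supplied by the Bourgeois--Oancea exact sequence. By Lemma \ref{lem-0}(i), no lower or $V$-generator of CZ-index $1,2,3$ lies in the window; by (ii), the upper bound $E+1$ is automatic for upper and $U$-generators. Hence the subcomplex is spanned by the single $U$-generator $x_2$ (CZ $2$, action $E$) together with upper generators $\gamma t^{-N}$ of CZ $1,2,3$ and action $>1$, which is a finite explicit list with $N$ bounded via Lemma \ref{lem-0}(iv).

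For the first stage, each $Z_k^+ \cong \R P^3$ is equipped with the Morse function $f_k^+$, whose four critical points have Morse indices $0,1,2,3$ (hence CZ indices $2k-1, 2k, 2k+1, 2k+2$). The Morse homology of $\R P^3$ over $\C$ is $\C$ in degrees $0,3$ and zero in degrees $1,2$, so at the $E_1$ page the inner pair $(\hat m_k^+, \check M_k^+)$ cancels. Sorted by CZ-index, the survivors in the window are $\check m_{2N+1}^+ t^{-N}$ in degree $1$, $\hat M_{2N}^+ t^{-N}$ (for $N\ge 1$) together with $x_2$ in degree $2$, and $\check m_{2N+2}^+ t^{-N}$ in degree $3$, with $N$ in a finite range.

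For the second stage, the relevant differentials count punctured pseudo-holomorphic cylinders in the symplectization of $\{|p|=r-\epsilon\} \cong ST^*S^2 \cong \R P^3$, whose Reeb flow is the geodesic flow on the round $S^2$. The Bourgeois--Oancea exact sequence \cite{BO}, combined with Viterbo's theorem, identifies the upper-generator part of the resulting complex with a Morse--Bott reduction of the energy functional on $\Lambda S^2$ relative to constants, whose non-constant critical submanifolds are precisely an $\R P^3$ of Morse index $2k-2$ for each $k \ge 1$. Since $H_*(\Lambda S^2, S^2;\C)$ equals $0, \C, 0, \C, \C, \C, \ldots$ starting in degree $0$ (a classical computation via the minimal Sullivan model of $\Lambda S^2$), tracking the CZ-to-loop-degree shift shows that all upper contributions to CZ $2$ cancel. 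Combining this with the observation that $x_2$ has $t$-degree $0$ and maximal action $E$ in the window (so it is not in the image of any Floer differential whose source and target both lie in the window), the class $[x_2]$ is the unique survivor in CZ $2$, giving $HF_2^{(1;E+1)}(H_E) = \C$.

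The main obstacle is the second stage. The BO machinery is developed for the Liouville filling $D^*S^2$, whereas our complex sits in the closed symplectic cut $W = S^2 \times S^2$. One must verify that the holomorphic curves allowed by the cut -- those intersecting the diagonal $\Sigma$ -- do not modify the SFT pairing pattern of upper generators in the action window, and that the degree and action bookkeeping matches across the cut. The convention that capping discs of upper generators lie in $W \setminus \Sigma$, imposed in the setup, is exactly what enables the reduction to the Liouville case.
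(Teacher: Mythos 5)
There is a genuine gap, and it sits exactly where you place your ``main obstacle.'' Your first stage (Morse--Bott cancellation of the inner pairs inside each $Z_k^+$, identification of the $t$-degree-zero part with symplectic homology of $\{|p|<r\}$, i.e.\ with loop space homology of $S^2$) reproduces what the paper encodes in $d_0$ and Lemma \ref{lem-folk}. But your second stage asserts that ``tracking the CZ-to-loop-degree shift shows that all upper contributions to CZ $2$ cancel,'' and this is false at the level of the Bourgeois--Oancea differential $d_0$ alone. The positive part of $H(B,d_0)$, namely $H_*(\Lambda S^2, S^2;\C)=0,\C,0,\C,\C,\C,\dots$, is one-dimensional in every degree $\geq 3$; hence after tensoring with $t$ the CZ-degree-$2$ part of the $d_0$-homology in the window contains, besides $[x_2]$, a class $t^{-s}[\hat M_{2s}]$ for every $s\geq 1$ with $s<(E-1)/\kappa$ (all of these have action $>1$ by Lemma \ref{lem-0}(iv)). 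So if the curves crossing the diagonal $\Sigma$ really ``did not modify the SFT pairing pattern,'' as you hope to verify, the group $HF_2^{(1;E+1)}(H_E)$ would have rank growing linearly in $E$, not $1$, and the theorem would fail. The quantum corrections are not a nuisance to be ruled out; they are the mechanism of the proof. The paper computes them (Lemma \ref{lem-2}): $d_l=0$ for $l\geq 2$, $d_1\check m_k=\hat m_{k+1}$, $d_1\check M_k=\hat M_{k+1}$, via an explicit count of maps $z\mapsto \lambda z^k(z-b)$ in a fiber of $\nu$ capped by planes in $W_{right}$, and then runs the spectral sequence of the $t$-filtration: on its first page $d^1=[t^{-1}d_1]$ sends $H_{2k+1}(B,d_0)$ onto $H_{2k+4}(B,d_0)$ (since $d_1(\check M_k-\check m_{k+1})=2[\hat M_{k+1}]$ in homology), which is precisely what kills every $t^{-s}[\hat M_{2s}]$ with $s\geq 1$ and leaves only $[x_2]$.

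Two smaller points. First, your argument that $x_2$ survives because it is ``not in the image of any Floer differential whose source and target both lie in the window'' is incorrect: $\check m_2$ and $\check M_1$ lie in the window (action $E+2(r-\epsilon)$, resp.\ $E+r-\epsilon$, both in $(1,E+1)$) and $d_0\check m_2=2\hat m_1+2x_2$, $d_0\check M_1=2\hat m_1+2x_2$; the class $[x_2]$ survives only because $x_2$ itself is not a boundary in $(B,d_0)$, not for action reasons. Second, even granting your stage-one cancellation, you never compute the induced differential coming from the cross-level terms of $d_0$ (the $2\hat m_{k-1}+2\hat M_{k-2}$ terms) nor justify the identification $H_2(C/D,\delta)=H_2(QB/QD,d)$ across the neck-stretching, which the paper needs in order to replace the Floer differential of $H_E$ by the deformed Bourgeois--Oancea differential; the role of the capping convention in $W\setminus\Sigma$ is to make the $t$-degree record intersections with $\Sigma$, not to reduce the problem to the Liouville case.
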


\medskip
\noindent {\bf Proof of Theorem \ref{thm-main} modulo Main Lemma:}

\medskip
\noindent {\sc Step 1:}  Look at the diagram

\[\xymatrix { & HF_3^{(E+1,+\infty)}(H_E) \ar[d]^-{j} &  \\
 HF_2^{(-\infty,1)}(H_E) \ar[r]^-{k} \ar[rd]^-{m} &
 HF_2^{(-\infty,E+1)}(H_E) \ar[d]^-{i} \ar[r]^-{l} & HF_2^{(1,E+1)}(H_E)
 = \C \\ & HF_2^{(-\infty, \infty)} = QH_4(M) = \C^2 &
 }\]

Here the horizontal and the vertical lines are exact, and the
triangle is commutative. Since $e_{\pm}=(1 \pm Pt) /2$ and $\max H_E
= E$  the spectral invariants $c(e_{\pm},H_E)$ do not exceed $E+1$.
Thus, since $QH_4$ is generated by $e_-,e_+$ the map $i$ is onto. By
Lemma \ref{lem-0}(i),(ii) $HF_3^{(E+1,+\infty)}(H_E)=0$. This yields
that $j=0$ so $i$ is an isomorphism, and in particular $l$ has a
non-trivial kernel. Thus $k \neq 0$ and we conclude that $m\neq 0$.

Assume that some non-zero quantum homology class $a=\alpha e_- +
\beta e_+$, $\alpha,\beta \in \C$, lies in the image of $m$.  This
yields $c(a,H_E) \leq 1$. Since $\tau_-(L)=1$ (see \cite{EP-rigid}),
we have that $c(e_-,H_E) \geq E$, and therefore $\beta \neq 0$.
Observe that the quantum product $e_+ * e_-$ equals $0$, while $e_+
* e_+=e_+$, $ e_-
* e_- = e_-$. Thus, by the triangle inequality for spectral
invariants,
$$c(e_+,H_E) = c(a*e_+,H_E) \leq c(a,H_E) + c(e_+,0) \leq 2\;.$$
Since this holds for every $E$ and $r < 1/2$, we conclude that
\begin{equation}\label{eq-tube}
\tau_+(\{|p| < 1/2\})=0\;.
\end{equation}

\medskip
\noindent {\sc Step 2:} Observe now that the Hamiltonian $|p|^2$ on
$W\setminus \Sigma$ extends to a {\it smooth} Hamiltonian on the
whole $W$. This Hamiltonian is integrable and yields a foliation of
$W \setminus (L \cup \Sigma)$ by Lagrangian tori. Look at these tori
in the tube $\{|p| \geq 1/2\}$. One readily checks by an argument in
the spirit of \cite{M}, that all these tori besides the monotone
exotic torus $K$ are displaceable.

One can prove the displaceability directly in the following way.
Write $W=S^2 \times S^2$ as
$$\{x_1^2 +y_1^2 +z_1^2 =1\} \times  \{x_2^2 +y_2^2 +z_2^2 =1\} \subset \R^3 \times \R^3\;.$$
Introduce functions $F$ and $G$ on $W $ by
$$F(x_1,y_1,z_1,x_2,y_2,z_2)= z_1 + z_2\;,$$
$$G(x_1,y_1,z_1,x_2,y_2,z_2)= x_1x_2+y_1y_2+z_1z_2\;,$$
and a map
\begin{equation}
\label{eq-Phi} \Phi= (F,G) : W  \to \R^2\;.\end{equation} One can
check directly that within this model the Hamiltonian $|p|$
corresponds to $\sqrt{(1+G)/2}$. It defines an integrable
Hamiltonian system with an integral $F$ (since $F$ generates a
circle action, such an integrable system is {\it semitoric}
\cite{PV}). The above-mentioned Lagrangian tori are given by
$$N_{a,b}:= \Phi^{-1}(a,b)= \{z_1 +z_2 =a, x_1x_2+y_1y_2+z_1z_2=b\}\;.$$
The monotone torus $K$ is given by $N_{0,-1/2}$.

Note that for $a\neq 0$,  $N_{a,b}$ is displaceable
by
$$(x_1,y_1,z_1,x_2,y_2,z_2) \to (-x_1,y_1,-z_1, -x_2,y_2,-z_2)\;.$$
For $a=0$, the torus $N_{0,b}$, $b \in (-1/2,1)$ can be displaced
inside the hypersurface $\Pi := \{z_1 +z_2 =0\}$. Indeed, let
$\phi_i$ be the polar angle in the $(x_i,y_i)$-plane. Consider a
fibration $\tau: \Pi \to C:=(-1;1) \times S^1$ given by
$$(x_1,y_1,z_1,x_2,y_2,z_2) \to (z_1, \phi_1 -\phi_2)\;.$$
One readily checks that for every $(z,\theta) \in C$ the preimage
$\tau^{-1}(z,\theta)$ consists of a closed orbit of the Hamiltonian
$z_1 +z_2$. Thus for every simple closed curve $\alpha \subset C$,
the preimage $\tau^{-1}(\alpha)$ is a Lagrangian torus in $\Pi$.
Denote by $\sigma$ the push-forward to $C$ of the symplectic form
restricted to $\Pi$. Since the symplectic form on $W$ is given by
$$\frac{1}{4\pi} (dz_1 \wedge d\phi_1 + dz_2 \wedge d\phi_2)\;,$$
we have that $\sigma=(4\pi)^{-1} dz \wedge d\theta$. Furthermore,
$N_{0,b} = \tau^{-1}(\alpha_b)$ with $$\alpha_b = \Big{\{}z^2 =
\frac{\cos \theta -b}{cos \theta +1}\Big{\}}\;.$$ Observe that
$\alpha_b$ is a contractible simple closed curve in $C$. Integration
yields
$$\frac{1}{4\pi}\int_{\alpha_{-1/2}} z d\theta = \frac{1}{2} =  \frac{1}{2} \text{Area}_\sigma (C)\;.$$
For $b > -1/2$ the curve $\alpha_b$ lies inside the disc bounded by
$\alpha_{-1/2}$ in $C$. Thus $\alpha_b$  is displaceable in $C$, and
therefore, by lifting the displacing isotopy to $\Pi$, we get that
$N_{0,b}$ is displaceable in $\Pi$. This completes the proof of the
displaceablility.

\medskip
\noindent {\sc Step 3:} Consider the push-forward $ \Phi_* \tau_+$
of the quasi-measure $\tau_+$ to the plane $\R^2$ by the map $\Phi$
given by \eqref{eq-Phi}. Let $(u,v)$ be Euclidean coordinates on
$\R^2$. Since $F$ and $G$ Poisson commute, $ \Phi_* \tau_+$ extends
to a measure, say $\sigma$ on $\R^2$. Recall that in our model of
$W$ the function $|p|$ corresponds to $\sqrt{(1+G)/2}$, and hence
the tube $\{|p| < 1/2\}$ is given by $\{G < -1/2\}$. Formula
\eqref{eq-tube} above implies that the support of $\sigma$ lies in
$\{v \geq -1/2\}$.

Further, by Step 2, every non-empty fiber $\Phi^{-1}(a,b)$ with
$$ b \geq -1/2, (a,b) \neq (0,-1/2)$$
is displaceable in $W$. Recall \cite{EP-qst} that every
Floer-homological symplectic quasi-measure vanishes on displaceable
subsets, and hence a point $(a,b) \in \R^2$ cannot lie in the
support of $\sigma$ provided the set $\Phi^{-1}(a,b)$ is
displaceable. Therefore the support of $\sigma$ consists of the
single point $(0,-1/2)$, so that $\sigma$ is the Dirac
$\delta$-measure concentrated in this point. Since the torus $K$ is
given by $\Phi^{-1}(0,-1/2)$, we get that
$$\tau_+(K) = \sigma((0,-1/2))=1\;.$$
This completes the proof of the theorem. \qed

\section{Proof of the Main Lemma}

\medskip
\noindent {\sc The strategy of calculation:}

\noindent By Lemma \ref{lem-0}  The Floer complex
$CF_i^{(1;E+1)}(H_E)$, $i=1,2,3$ is generated by upper generators
and $U$-generators in the action window $(1;E+1)$ satisfying the
multiplicity bound \eqref{eq-k-bound}. The lower generators and
$V$-generators leave the stage. {\it Thus we shall suppress the
upper index $+$ and set $Z_k = Z_k^+$, $f_k:=f_k^+,
\check{m}_k=\check{m}_k^+$, etc.}

Denote by $B_n$, $n \geq 0$ the span over $\C$ of generators
$$\gamma \in \{x_0 ,x_2,\check{m}_{k}, \hat{m}_{k}, \check{M}_{k} ,
\hat{M}_{k}, k \geq 1\}$$ of the Conley-Zehnder index $n$.

Put $C_{i,s} = t^{-s}B_{i+4s}$, where $i\geq 0$ and $s \geq 0$.
Write $C_i = \oplus_s C_{i,s}$ and $C = C_1 \oplus C_2 \oplus C_3$.
Denote by $D \subset C$ the subspace consisting of the elements of
action $< 1$ and set $D_{i,s} = D \cap C_{i,s}$. By Lemma
\ref{lem-0}(i)-(iii) the Floer complex of $H_E$ in the action window
$(1;E+1)$ and in degrees $1,2,3$ is given by $(C/D, \delta)$, where
$\delta: C/D \to C/D$ is the Floer differential. The differential
$\delta$ has the form
\begin{equation}\label{eq-delta}
\delta = \delta_0 + t^{-1}\delta_1 + t^{-2} \delta_2+... \; \mod D
\end{equation}
with $\delta_l : B_* \to B_{*+4l-1}$. Let us emphasize that only
negative powers of $t$ appear in this expression: this follows from
the fact that Floer trajectories of $H_E$ are holomorphic near
$\Sigma$ and intersect it positively. With this notation,
\begin{equation}\label{eq-BO-0}
HF_2^{(1;E+1)}(H_E)  = H_2(C/D,\delta)\;.
\end{equation}

\medskip\noindent To motivate the strategy of calculation of this homology group,
identify
\begin{equation}\label{eq-filt}
CF_i^{(1;E+1)}(H_E)= CF_i^{(1-E;1)}(H_E-E)\;
\end{equation}
and look at  $C_{i,0}/D_{i,0}$ considered as a subspace of
$CF_i^{(1-E;1)}(H_E-E)$. With this identification homology of the
complex $(\oplus C_{i,0}/D_{i,0}, \delta_0)$ converge to symplectic
homology $SH(U')$ of the domain $U'=\{|p| < r\}$ in the action
window $(-\infty; 1)$ provided $E \to \infty$ and $\epsilon \to 0$.
Indeed, functions $H_E-E$ restricted to $U'$ form an exhausting
sequence used in the definition of symplectic homology, the complex
$\oplus C_{i,0}/D_{i,0}$ is generated by closed orbits of $H_E-E$
capped inside $U'$ and the differential $\delta_0$ counts the Floer
trajectories lying inside $U'$. The contribution of the lower
generators disappears in this limit.

Now we can formulate the intuitive idea behind our calculation: The
complex $(C/D,\delta)$ can be considered as a properly understood
deformation of $(\oplus C_{i,0}/D_{i,0}, \delta_0)$ which involves
capping discs and Floer trajectories intersecting $\Sigma$.
Eventually, the required homology $H(C/D,\delta)$ can be calculated
by an appropriate spectral sequence which starts at $SH(U')$.

To make this precise, we use the technology developed by Bourgeois
and Oancea \cite{BO} (and extended further in \cite {BEE})  who
identified symplectic homology of the Liouville domain $\{|p| < r\}$
with the homology of the complex $B: = \oplus B_i = \oplus_i
C_{i,0}$ equipped with certain differential $d_0$ which will be
described below. In fact we shall introduce an appropriate
deformation of their construction which takes into account the fact
that Floer cylinders can intersect $\Sigma$ and which will enable us
to calculate homology of the deformed complex $(C/D,\delta)$.

As a graded and filtered \footnote{ One should shift our filtration
by $E$ to get the standard filtration on symplectic homology used in
\cite{BO}, see \eqref{eq-filt}.} vector space the deformed
Bourgeois-Oancea complex is described as follows.
 Introduce the ring $\Lambda$
consisting of all Laurent series of the form
$$\sum_{s=0}^{+\infty} \lambda_s t^{-s}\;, \lambda_s \in \C \;.$$
With this notation the deformed Bourgeois-Oancea complex is given by
$QB:= B \otimes_{\C} \Lambda$. As before, this complex is graded by
$CZ(\gamma t^{-s}) = CZ(\gamma)-4s$ and filtered by the symplectic
action of $H_E$: $A(\gamma t^{-s}) = A(\gamma)-s$. Its differential
$d$ is $\Lambda$-linear and has the form
$$d = d_0 + t^{-1}d_1 + t^{-2}d_2 +...\;,$$
with $d_l : B_* \to B_{*+4l-1}$. It is instructive to view $(QB,d)$
as a ``quantum" deformation of the complex $(B,d_0)$ where $t$ plays
the role of a deformation parameter. By \cite{BO} the group
$H(B,d_0)$ coincides with symplectic homology of the Liouville
domain $\{|p| < r\} \subset T^*S^2$. The latter, according to
\cite{AS,SW}, equals homology of the free loop space of $S^2 = L$.
Therefore

\begin{equation}
\label{eq-loops} H(B,d_0) = H(\cL S^2)\;.
\end{equation}

In order to describe the differential $d$ we need some
preliminaries.

\medskip
\noindent{\sc Stretching-the-neck:}

\noindent Denote by $\pi: \nu \to \Sigma$ the holomorphic normal
line bundle to $\Sigma$ in $W= \C P^1 \times \C P^1$. Perform a
stretching-the-neck \cite{EGH,B-etc} of $W$ at the hypersurfaces
$\{|p|= r-\epsilon\}$ and $\{|p| = r\}$. The manifold $W$ splits
into three pieces which after gluing in (asymptotically) cylindrical
ends at their boundaries will be identified with $W_{left} := W
\setminus \Sigma$, $W_{middle}: = \nu \setminus \Sigma$ and
$W_{right}:= \nu$.

It would be convenient to view orbits from $Z_k$ as
$k$-times-covered unit circles of the bundle $\nu$. In particular,
the projection $\pi: \nu \to \Sigma$ gives rise to the natural map
$\pi^k: Z_k \to \Sigma$.

We shall assume that the exhausting Morse function $f_U$ is defined
on the whole $W_{left}$.

\medskip\noindent {\sc Matrix coefficient $(d_l\gamma_+,\gamma_-)$ for upper
generators $\gamma_+,\gamma_-$:}

\noindent Denote by $P_{a,b}$   the cylinder $\C \setminus {0}$ with
the set of {\it negative} interior punctures $a=\{a_1,...,a_{l_-}\}$
and the set of {\it positive} interior punctures $b=\{b_1, ...,
b_{l_+}\}$. Here we put $l_+ =l$ (recall that we are defining
$d_l$).

Let $\gamma$ be an orbit from $Z_k$. In our picture it is
interpreted in two different ways. First, it is a point of the
corresponding critical variety $Z_k$ (recall that upper index $+$ is
omitted). We denote this point by $A_\gamma$. Second, $\gamma$ is a
(parameterized, in general multiply covered) unit circle in the
fiber of the bundle $\nu$ over the projection $\pi(A_\gamma)$.

In what follows we work with holomorphic maps $$u : P_{a,b} \to
W_{middle}= \nu \setminus \Sigma \;.$$  We say that $u$ {\it enters}
$\gamma$ at a puncture $\eta \in a \cup \infty$ if $u(z)/|u(z)| \to
\gamma(\text{arg}(z))$ and $|u(z)| \to \infty$  as $z \to \eta$. We
say that $u$ {\it exits } $\gamma$ at a puncture $\eta \in b \cup 0$
if $u(z)/|u(z)| \to \gamma(\text{arg}(z))$ and $|u(z)| \to 0$  as $z
\to \eta$.

Suppose that $u$ exits $\alpha$ at $0$, enters $\beta$ at $\infty$ and
in addition exits (resp. enters) {\it some} orbits at all positive (resp. negative)
interior punctures. We shall denote this by
$$\xymatrix{\alpha \ar[r]^u & \beta}\;.$$ We shall also book-keep the quantity
$l_-$ by putting \begin{equation}\label{eq-weight}\text{weight}(u) =
2^{l_-}\;.\end{equation}

Note that geometrically such $u$'s either are contained in a single
fiber of $\nu$, or correspond to multi-sections of $\nu$ with zeroes
at $\pi(A_\alpha)$ and the projections of the asymptotic images of
the positive punctures, and with poles at $\pi(A_\beta)$ and the
projections of the asymptotic images of the negative punctures.

Suppose now that $\alpha,\beta$ belong to the same critical manifold
$Z_k$. We shall write
\begin{equation}
\label{eq-arrow} \xymatrix{\alpha \ar@{~>}[r]^v & \beta}\;
\end{equation}
 if $v$ is
a parameterized piece of trajectory of the negative gradient flow of
$f_k$ joining the points $A_{\alpha}$ and $A_{\beta}$.
\footnote{We will assume that the gradient vector field for each function $f_k$ satisfies the following    condition:
 the $1$-dimensional stable manifold    of  $\check  M_k$ (resp. the unstable manifold of $\hat m_k$)
 consists of orbits  which differ from $\check M_k$ (resp. from $\hat m_k$) only by their parameterization.}

Let $\gamma_+,\gamma_-$ be two upper generators representing
critical points of $f_{k_+},f_{k_-}$ on $Z_{k_+},Z_{k_-}$
respectively. Assume that $k_+ \neq k_-$. Consider all possible
configurations of the form
\begin{equation}\label{eq-BOconfig}
\xymatrix {\gamma_+ \ar@{~>}[r]^v & \alpha \ar[r]^u &  \beta
\ar@{~>}[r]^w & \gamma_- }\;. \end{equation} We call
$\text{weight}(u)$ the weight of this configuration. Note that the
right and/or the left arrow could be empty. In case $k_+ =k_-=k$, we
consider configurations of the form
\begin{equation}
\label{eq-arrow-1} \xymatrix{\gamma_+ \ar@{~>}[r]^v & \gamma_-}\;,
\end{equation}
and its weight is put to be $1$. Finally, we define
$(d_l\gamma_+,\gamma_-)$ as the sum of weights taken over the
$0$-dimensional components of the moduli spaces of configurations of
the form \eqref{eq-BOconfig} and  \eqref{eq-arrow-1}. Note that in
the definition of the moduli spaces \eqref{eq-BOconfig} the markers
$a,b$ are varying as well. In addition, each weight should be taken
with a sign responsible for the orientation of the moduli space. The
orientation issue will be ignored in this note.

\bigskip
\noindent

\noindent {\sc Matrix coefficients $(d_l x, \gamma )$ and $(d_l
\gamma,x)$ for an upper orbit $\gamma$ and an $U$-orbit $x$:}

\noindent First, note that all the coefficients $(d_l \gamma, x)$
vanish for $l > 0$ by index reasons, so we do not need to describe
here the algorithm for their computing. For the description of
$(d_0\gamma, x)$ we refer to \cite{BEE}.

Let us describe the algorithm for computing of coefficients $(d_l x,
\gamma )$. Suppose that
 the multiplicity of the orbit $\gamma$ is equal to $k$.
 Then the coefficient $(d_lx,\gamma)$ counts rigid configuration
\begin{equation}
\label{eq-guc} (g,u,c)\; ,
\end{equation}
 where
\begin{itemize} \item
$g$ is a  minus gradient trajectory of  the function $f_U$ which
begins
 at $x$ and ends at a point $p\in \p U$; note that $\p U$ can be canonically
  identified with the $S^1$-bundle  associated with the complex line
   bundle $\nu$, and thus $p$ determines a ray $l_p$ in one of the fibers of $\nu$;
\item $u:P_{a,b}\to W_{middle}$ a holomorphic map such that $u$ exits
 (resp. enters)  some orbits at  all positive (resp. negative) punctures,
  enters  an orbit $\wt \gamma$ of multiplicity $k$ at $\infty$
   and $\lim\limits_{z\to 0} u(z) \in l_p$; note that the set of
   positive
    punctures must be non-empty due to the maximum principle;
\item  $c$ is a minus gradient trajectory of the function $f_k$
 connecting $\wt \gamma$ and $\gamma$. \end{itemize}

   A new feature of
     configurations $(g,u,c)$ considered above is the ray $l_p$
which connects the holomorphic curve $u$ with the gradient
trajectory $g$. This requires a justification which will be given
elsewhere.

\bigskip
\noindent {\it This completes the description of the differential
$d$ on $QB$.}

\bigskip

\medskip
\noindent {\sc Comparison of Floer and Bourgeois-Oancea homologies:}

\noindent Recall that $D$ denotes the subspace of $C$ consisting of
elements of action $< 1$. Denote by $QD \subset QB$ the subspace
consisting of elements of action $< 1$. We shall use the following
equality:
\begin{equation}\label{eq-BO}
H_2(C/D, \delta) = H_2(QB/QD,d)\;.
\end{equation}

Note that for $i=1,2,3$ we have $C_i = QB_i = \oplus _s B_{i+4s}
\otimes_\C t^{-s}$, and $D_i = QD_i = QB_i \cap D$. Let us compare
the differentials. The stretching-the-neck procedure described above
has the following effect on the original Floer trajectories of our
Hamiltonian $H_E$: every Floer trajectory joining a pair of upper
generators $\gamma_+$ and $\gamma_-$ splits into three pieces. The
piece lying in $W_{middle} = \nu \setminus \Sigma$ is the Floer
trajectory joining $\gamma_+$ and $\gamma_-$ with positive punctures
(corresponding to the intersection points with $\Sigma$) and
possibly some negative punctures. The orbit $\gamma_+$ lies on the
connected component of the ideal boundary of $W_{middle}$ adjacent
to $W_{right}$, while the orbit $\gamma_-$ lies on the connected
component of the ideal boundary of $W_{middle}$ adjacent to
$W_{left}$. The positive punctures are capped by rigid holomorphic
planes (the fibers of $\nu$) lying in $W_{right}$.  The negative
punctures are capped by rigid holomorphic planes lying in $W_{left}$
corresponding to $\C P^1 \times \text{point}$ and $\text{point}
\times \C P^1$ in $\C P^1 \times \C P^1$.  Thus every negative
puncture is capped by exactly $2$ rigid planes, which yields the
factor $2^{l_-}$ in the definition of the weight in
\eqref{eq-weight}.

Let's focus on the piece lying in $W_{middle}$: one extends
Bourgeois-Oancea theory \cite[Prop. 4]{BO} and finds an isomorphism
between homology whose differential is determined by such punctured
Floer trajectories and the homology whose differential is described
by configurations of the form \eqref{eq-BOconfig} and
\eqref{eq-arrow-1}. This explains equality \eqref{eq-BO}. The formal
proof will be given in a forthcoming paper.

\medskip
\noindent\begin{rem}{\rm From the viewpoint of the Hamiltonian Floer
theory, the differential $\delta: C/D \to C/D$ does not lifts
canonically to a differential $C \to C$: the square of the
expression in the right hand side of formula \eqref{eq-delta}
vanishes modulo the subcomplex $D$. Since $d^2=0$, the argument
above shows that in the limit, ``when the neck is stretched", the
square of this expression vanishes in $C$ itself.}
\end{rem}

\medskip
\noindent {\sc Unperturbed differential:}

\noindent The explicit form of the differential $d_0$ is folkloric
(private communications of F.Bourgeois and T.Ekholm). We shall
present the result right now.

\medskip\noindent
\begin{lemma}\label{lem-folk}
We have
$$d_0\hat{m}_k = 0, d_0\hat{M}_k=0\;, k \geq 1\;,$$
$$ d_0\check{m}_k = 2\hat{m}_{k-1}+2\hat{M}_{k-2}, k \geq 3\;,$$
$$ d_0 \check{M}_k = 2\hat{m}_k + 2\hat{M}_{k-1}, k \geq 2\;,$$
$$ d_0\check{m}_2 = 2\hat{m}_{1}+2 x_2\;,$$
$$ d_0 \check{M}_1 = 2\hat{m}_1 + 2x_2\;,$$
$$d_0 \check{m}_1 =0\;,$$
$$d_0 x_0 =0, d_0 x_2 = 0\;.$$
\end{lemma}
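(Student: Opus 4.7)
The plan is to identify $d_0$ with the Bourgeois--Oancea differential for the symplectic homology of the Liouville domain $\{|p|<r\}\subset T^{*}S^{2}$: since the $t^{0}$-coefficient of $d$ only sees cylinders which do not cap across $\Sigma$, computing $d_0$ reduces to counting rigid configurations of the two types already described in \eqref{eq-arrow-1} and \eqref{eq-BOconfig}, namely pure gradient trajectories of $f_k$ inside a single Morse--Bott family $Z_k\cong\R P^{3}$, and ``cascades'' built from a single punctureless holomorphic cylinder $u\colon\C^{*}\to\nu\setminus\Sigma$ between different families $Z_{k_+}$ and $Z_{k_-}$, decorated at both ends by gradient pieces of $f_{k_{\pm}}$. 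Each such cylinder projects by $\pi$ to a holomorphic map $\C P^{1}\to\Sigma=\C P^{1}$ of some nonnegative degree $d$; viewing $u$ as a twisted meromorphic section of $(\pi\circ u)^{*}\nu$ with a zero of order $k_-$ at $0$ and a pole of order $k_+$ at $\infty$ imposes a linear relation between $k_{+}$, $k_{-}$, $d$ coming from $\deg\nu=2$, which controls which pairs $(k_+,k_-)$ can be connected.

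I would first dispose of the vanishings $d_0\hat m_k=0$ and $d_0\hat M_k=0$ by combining a dimension count with the footnote choice of the gradient flow for $f_k$. The critical point $\hat M_k$ is the maximum of $f_k$ on $Z_k$, so \eqref{eq-arrow-1} cannot leave it, and the $0$-dimensional stable manifold of $\hat M_k$ admits no rigid cylinder attachment with total index drop one. For $\hat m_k$, the footnote places its $1$-dimensional unstable manifold along the reparameterization $S^{1}$ through $\hat m_k$, so any hypothetical rigid cascade leaving $\hat m_k$ would have to begin with a trivial cylinder over this $S^{1}$, which positivity of intersection with $\Sigma$ and the index-drop constraint together forbid.

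Next I would track two geometric sources of the recurring factor $2$: (a) a Morse function on the reparameterization $S^{1}$ carrying $\check M_k$ and $\hat m_k$ has exactly two gradient trajectories from its maximum $\check M_k$ to its minimum $\hat m_k$, which yields the coefficient $2$ in the $\check M_k\to\hat m_k$ contribution to $d_0\check M_k$ via \eqref{eq-arrow-1}; and (b) the normal bundle $\nu\cong\mathcal O(2)$ has degree $2$ over $\Sigma\cong\C P^1$, so the moduli of rigid multi-sections that participate in the cascades $\check M_k\to\hat M_{k-1}$ and $\check m_k\to\hat m_{k-1}$ (multiplicity drop one, realized via a simple section) and $\check m_k\to\hat M_{k-2}$ (drop two, realized via a doubly ramified section) carries a canonical count of $2$ coming from the two simple zeros, respectively the one double zero, of a generic holomorphic section of $\mathcal O(2)$. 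The boundary formulas $d_0\check M_1=2\hat m_1+2x_2$ and $d_0\check m_2=2\hat m_1+2x_2$ then arise because when the multiplicity at the negative end would drop to zero, the cylinder degenerates to a holomorphic plane in the disc bundle $\nu$, and the cascade continues into the maximum set $U$ along a gradient trajectory of $f_U$ terminating at $x_2$, exactly according to the mechanism in the definition of $(d_l x,\gamma)$. Finally $d_0\check m_1=0$ follows because $\check m_1$ has the minimal admissible multiplicity, and the relation $k_+-k_-=2d-(\text{other divisor data})$ with $\deg\nu=2$ and $d\geq 0$ excludes any rigid cascade leaving it.

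The main obstacle is the rigorous Morse--Bott SFT enumeration of all these configurations with consistent signs, in particular the comparison of degenerate in-fiber cylinders $z\mapsto cz^n$ with transverse multi-section cylinders of positive projection degree, for which one relies on the foundational analysis of \cite{BO,BEE}. A useful consistency check is provided by \eqref{eq-loops}: the resulting $H(B,d_0)$ must reproduce $H_*(\cL S^2)$, and a degree-by-degree comparison with the known additive structure of the free loop space homology of $S^{2}$ pins down both the counts and the sign conventions behind the formulas listed in the lemma.
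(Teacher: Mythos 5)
Your overall strategy matches the paper's: constrain the possible cascades by combining the index formula with the degree relation coming from $c_1(\nu)=2$, count the rigid configurations of types \eqref{eq-arrow-1} and \eqref{eq-BOconfig}, and use the identification $H(B,d_0)\cong H_*(\cL S^2)$ as a check. But three of your geometric pictures are wrong in ways that would lead to an incorrect count if pursued.

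First, the multiplicity-drop-one cascades $\check m_k\to\hat m_{k-1}$ and $\check M_k\to\hat M_{k-1}$ are \emph{not} realized by simple sections of $\nu\cong\mathcal O(2)$. If the projection degree $\Delta$ were $1$ and there were no punctures, the Riemann--Roch relation \eqref{eq-RR} would force $k_+-k_-=2$, not $1$. A drop of $1$ forces $\Delta=0$, so the cylinder lies in a single fiber of $\nu$, and has exactly one negative puncture, $l_-=1$; the factor $2$ is the weight $2^{l_-}$ of \eqref{eq-weight}, coming from the two rigid planes in $W_{left}$ ($\C P^1\times\text{pt}$ and $\text{pt}\times\C P^1$) capping that negative puncture, not from zeros of a section of $\mathcal O(2)$. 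Only the drop-two coefficient $(d_0\check m_k,\hat M_{k-2})$ involves $\Delta=1$. Second, $\check M_k$ and $\hat m_k$ do not lie on the same reparameterization circle: by the paper's choice of $f_k$, the pairs sharing an unparameterized orbit are $(\hat m_k,\check m_k)$ and $(\hat M_k,\check M_k)$. The term $2\hat m_k$ in $d_0\check M_k$ is simply the Morse differential $C_2\to C_1$ of $\R P^3$ over $\C$ (reflecting $H_1(\R P^3;\Z)=\Z/2$); the two trajectories lie in $W^u(\check M_k)\cap W^s(\hat m_k)$, not in any reparameterization circle. Third, and most significantly, $d_0\check m_1=0$ is \emph{not} forced by a degree exclusion. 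The only candidate target is $x_0$, and the coefficient $(d_0\check m_1,x_0)$ is governed by the $U$-generator mechanism, for which there are a priori two rigid holomorphic planes asymptotic to $\check m_1$ that could contribute. The paper proves they cancel indirectly: if $(d_0\check m_1,x_0)\neq 0$ one would get $H_1(B,d_0)=0$, contradicting $H_1(\cL S^2)\neq 0$. So here the loop-space identity \eqref{eq-loops} is not a consistency check but the actual proof, and your direct exclusion argument would fail.
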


\medskip
\noindent It follows that the homology of the complex $(B,d_0)$ are
given by
$$H_0(B,d_0) = \text{Span}_{\C} ([x_0]), H_2(B,d_0) = \text{Span}_{\C}
([x_2])\;,$$ $$H_{2k+2}(B,d_0) = \text{Span}_{\C} ([\hat{M_k}])\;,
 k \geq 1,$$
$$H_1(B,d_0)= \text{Span}_{\C} ([\check{m}_1])\;,$$
$$H_{2k+1}(B,d_0)= \text{Span}_{\C} ([\check{M}_k-\check{m}_{k+1}])\;,  k \geq 1.$$

\medskip\noindent
It remains to describe the differential $d_l$ for $l \geq 1$.

\medskip
\noindent {\sc Calculation of the quantum corrections:}

\medskip
\noindent \begin{lemma}\label{lem-2}
\begin{itemize}
\item[{(i)}] For all $l \geq 2$ one has $d_l = 0$;
\item[{(ii)}] For all $k \geq 1$,
$$d_1 x_0 = d_1 x_2 = 0$$
$$d_1 \check{m}_k = \hat{m}_{k+1},\;
d_1\check{M}_k=\hat{M}_{k+1},\;d_1 \hat{m}_k=0, d_1 \hat{M}_k=0\;.$$
\end{itemize}
\end{lemma}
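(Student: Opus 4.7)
My plan has three steps. Step~1 is the classification of the relevant holomorphic curves. Any $u\colon P_{a,b}\to \nu\setminus\Sigma$ descends to a holomorphic map $\bar u\colon \C P^1\to\Sigma$ of some degree $d$, and $u$ itself is then a meromorphic section of $\bar u^*\nu$. Since $\nu=\mathcal{O}(2)$ over $\Sigma\cong \C P^1$ (the diagonal in $S^2\times S^2$ has self-intersection $2$), the balance of asymptotic multiplicities reads: the sum of the multiplicities at the exit punctures minus the sum at the enter punctures equals $2d$. This ties together the degree of the projection $\bar u$ and the divisor data of $u$ at the punctures.

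Step~2 is the vanishing $d_l=0$ for $l\geq 2$. The integer $l$, being the number of extra positive interior punctures, equals the intersection number of the original (pre-stretching) Floer trajectory with $\Sigma$, so contributions to $d_l$ come from Floer cylinders lying in homology classes $(a,b)\in H_2(W)$ with $a+b=l$. Combining the SFT Fredholm index of the middle piece $u$ with the Morse--Bott cascade dimension formula for the configurations \eqref{eq-BOconfig}, the multiplicity bound \eqref{eq-k-bound}, and the Conley--Zehnder indices from Table~1, I plan to show that the virtual dimension of the relevant rigid moduli is strictly positive for every $l\geq 2$, so that no rigid configurations exist and $d_l=0$.

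Step~3 is the explicit calculation of $d_1$. A further Fredholm count at $l=1$ forces $d=0$ (so $u$ lies in a single fiber $\nu_p$), no negative interior punctures, and simple asymptotic multiplicity $k_1=1$ at the extra positive puncture. Such a $u$ has the explicit polynomial form $u(z)=c\,z^{k_+}(z-1)$ inside $\nu_p$, parameterized by $(p,c)\in\Sigma\times\C^*$; its asymptote at $\infty$ then has multiplicity $k_+ +1$, which already explains the shift $k\mapsto k+1$ in the formulas. The vanishing $d_1\hat M_k=0$ is immediate by dimension, since a Conley--Zehnder-compatible target on $Z_{k+1}$ would require Morse index~$4$. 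The vanishing $d_1\hat m_k=0$ follows from the following geometric fact: by the footnote, $W^u(\hat m_k)$ consists of reparameterizations of a single orbit, and the phase coupling through $c$ sends it into a reparameterization locus in $Z_{k+1}$ parallel to, but disjoint from, $W^s(\check M_{k+1})$. For $\gamma_+\in\{\check m_k,\check M_k\}$, by contrast, the evaluation map at $\infty$ meets $W^s(\hat m_{k+1})$ (resp.\ $W^s(\hat M_{k+1})$) transversally in a single point, giving the coefficient $+1$. Finally, the vanishing $d_1 x_0=d_1 x_2=0$ follows from an analogous examination of the $(g,u,c)$ configurations \eqref{eq-guc}: at $l=1$ the ray-attachment datum $\lim_{z\to 0}u(z)\in l_p$ raises the virtual dimension enough that no rigid configuration survives.

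The main obstacle is Step~3: identifying, for $\gamma_+\in\{\check m_k,\check M_k\}$, both the correct target critical point in $Z_{k+1}$ and the sign of the rigid contribution. This requires carefully tracking how the single complex parameter $c$ couples the asymptotic phases at $0$ and $\infty$ for the curve $u(z)=cz^{k_+}(z-1)$, converting this coupling into a Morse-theoretic statement on the orbit space $Z_k\cong\mathbb{RP}^3$ with respect to the chosen Morse function $f_k$, and maintaining consistent orientation conventions throughout the Morse--Bott SFT bookkeeping.
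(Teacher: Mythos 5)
Your proposal follows the paper's broad outline for parts of the argument — the degree/Riemann--Roch constraint $(k_+ + l_+) - (k_- + l_-) = 2\Delta$, the explicit polynomial $u(z)=c\,z^{k}(z-1)$ for the $l=1$ computation, the dimension reason why $d_1\hat{M}_k=0$ — but there are two genuine gaps and one place where the paper takes a fundamentally different, more robust route.

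First, for the vanishing $d_l=0$, $l\geq 2$, you assert that you ``plan to show the virtual dimension is strictly positive,'' but do not carry this out. Once one combines the index relation $2(k_+-k_-)+h+4l_+=1$ with the degree relation to get $l_++l_- = (1-h)/2 - 2\Delta$, one does obtain $\Delta=0$, but $h=-3$ (i.e.\ a contribution from $\check m_{k_+}$ to $\hat M_{k_-}$) survives the numerology for $l_+=1, l_-=1$ and for $l_+=2, l_-=0$. The paper rules out $h=-3$ by a separate transversality argument: since $\check m_{k_+}$ is a Morse minimum and $\hat M_{k_-}$ a Morse maximum, the connecting gradient legs are constant and $\hat M_{k_-}$ would be forced to lie on a specific circle fiber of $Z_{k_-}$ over $\pi^{k_+}(\check m_{k_+})$, which generically it does not. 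Without this step the claim $l_-=0$ and ``$u$ in a single fiber'' do not follow from index counting alone. Your proposal needs this missing lemma.

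Second, and more substantively, your argument for $(d_1\hat m_k,\check M_{k+1})=0$ is a genuinely different route from the paper's, and as stated it has a gap. You assert that the image of $W^u(\hat m_k)$ under the phase coupling through $c$ lands in a reparameterization locus in $Z_{k+1}$ ``parallel to, but disjoint from'' $W^s(\check M_{k+1})$. That disjointness is precisely what needs to be proved; it depends on the choice of Morse functions $f_k, f_{k+1}$, and the footnote's normalization of $W^u(\hat m_k)$ and $W^s(\check M_{k+1})$ as $S^1$-orbits does not by itself make them disjoint from the image of the evaluation map. The paper sidesteps all of this geometry with a one-line algebraic argument: $d^2=0$ forces $d_1 d_0 + d_0 d_1 = 0$; since $d_0\hat m_k = 0$ (from Lemma~\ref{lem-folk}), any $d_1\hat m_k = n\check M_{k+1}$ would give $n\,d_0\check M_{k+1}=0$, and $d_0\check M_{k+1}\neq 0$ forces $n=0$. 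This is cleaner, requires no new transversality, and is the argument you should use. Similarly, for $d_1 x_0 = d_1 x_2 = 0$, the paper's argument is a precise $S^1$-rotation argument (rotating $u$ produces a one-parameter family of valid $(g,u\circ r_\alpha,c_\alpha)$ configurations, contradicting rigidity), whereas your ``raises the virtual dimension'' formulation is not accurate: the ray constraint $\lim_{z\to 0}u(z)\in l_p$ is a codimension-one condition, not a dimension-raising one; it is the $S^1$-symmetry that destroys rigidity.
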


\medskip
\noindent {\bf Proof of Main Lemma:}

1) Define a decreasing filtration $\dots \supset QB^{(\mu)} \supset
QB^{(\mu+1)} \supset \dots $ on $QB$ by
$$QB_*^{(\mu)} = \oplus_{s \geq \mu+1}C_{*,s} = \oplus_{s \geq \mu+1}t^{-s}B_{*+4s}\;,$$
and observe that the differential $d$ preserves the filtration.
Furthermore, by Lemma \ref{lem-0}(iv) for $E$ large enough the
subspace $QD_i \subset QB_i$, $i=1,2,3$ consisting of elements of
the $H_E$-action $< 1$ is squeezed between $QB_i^{(\mu_+)}$ and
$QB_i^{(\mu_-)}$ with $\mu_\pm(E) \to \infty$ as $E \to \infty$:
$$QB_i^{(\mu_-)} \supset QD_i \supset QB_i^{(\mu_+)}\;.$$
We shall show in the next step that for $\mu$ large enough
$H_2(QB/QB^{(\mu)},d)$ is isomorphic to $\C$ and is generated by
$x_2 \mod QB^{(\mu)} $. The latter implies that the map
$H_2(QB/QB^{(\mu_+)},d) \to  H_2(QB/QB^{(\mu_-)},d)$ is an
isomorphism. Since it factors through $H_2(QB/QD,d)$ we shall
conclude that $H_2(QB/QD,d)=\C$.

2) Lemma \ref{lem-2} yields that $d_1: H_{2k}(B,d_0) \to
H_{2k+3}(B,d_0)$ vanishes while $d_1: H_{2k+1}(B,d_0) \to
H_{2k+4}(B,d_0)$ is onto for all $k \geq 0$. Fix $\mu$ large enough
and identify $\overline{QB}:= QB/QB^{(\mu)}$ with $\oplus_{s \leq
\mu} C_{*,s}$.

We claim that
$$H_2(\overline{QB},d) = H_2(B,d_0) = \C\;.$$
Indeed, consider a filtration $\cF_p \overline{QB}_*:= \oplus_{\mu
\geq s \geq \mu-p} C_{*,s}$ on the complex $\overline{QB}$. Look at
the homology spectral sequence corresponding to this filtration
\cite{Spanier}: we have that
$$E^1_j = \oplus_{s \leq \mu} H_j(C_{*,s}, d_0) = \oplus_{s \leq \mu}
t^{-s}H_{j+4s}(B,d_0)$$ and the differential $d^1: E^1_j \to
E^1_{j-1}$  can be written as
$$d^1= \oplus d^1_s,\;\, d^1_s = [t^{-1}d_1]: H_j( C_{*,s}, d_0) \to H_{j-1}(C_{*,s+1}, d_0)\;, s \leq \mu-1\;.$$
Since $d=d_0 + t^{-1}d_1$, the spectral sequence degenerates at the
second page and converges to $E^2_j := H_j(E^1, d^1)$, and in
particular $H_2(\overline{QB},d) = E^2_2$. In order to calculate
$E^2_2$ look at the piece
\begin{equation} \label{eq-piece}
H_3(C_{*,s-1}, d_0) \to H_2(C_{*,s}, d_0) \to H_1(C_{*,s+1}, d_0)
\end{equation}
of the complex $(E^1, d^1)$. If $\mu-1 \geq s \geq 1$, the left arrow is onto,
while the right arrow is zero, thus the homology vanishes.
If $s=\mu$, the sequence \eqref{eq-piece} degenerates to
$$H_3(C_{*,\mu-1}, d_0) \to H_2(C_{*,\mu}, d_0) \to 0\;,$$
and since the left arrow is onto, the homology vanishes. Finally,
for $s=1$ the sequence \eqref{eq-piece} degenerates to
$$0 \to H_2(C_{*,0}, d_0) \to H_1(C_{*,1}, d_0)\;.$$
Since the right arrow vanishes, the resulting homology is
$$H_2(C_{*,0},d_0) = H_2(B,d_0)= \C\;.$$ We conclude that $H_2(\overline{QB},d) = E^2_2
=\C$, and the generator of $H_2(\overline{QB},d)$ is $x_2 \mod
QB^{(\mu)}$, as required.

\medskip\noindent
It remains to put all the pieces together: The calculation above
together with equalities \eqref{eq-BO-0} and \eqref{eq-BO} yield

$$ HF_2^{(1;E+1)}(H_E)  = H_2(C/D,\delta)=  H_2(QB/QD,d)= H_2(B,d_0)=\C\;.$$
This completes the proof.
\qed

\medskip
\noindent
{\bf Proof of Lemma \ref{lem-2}:}

1) First we explore coefficients of the form
$(d_l \gamma_+,\gamma_-)$ with $l=l_+ \geq 1$,
 $$\gamma_\pm \in \{\check{m}_{k_\pm},
\hat{m}_{k_\pm}, \check{M}_{k_\pm} , \hat{M}_{k_\pm}\}\;.$$
 The index formula reads
$$CZ(\gamma_+)-CZ(\gamma_-) + 4l_+ = 1\;.$$
Recall that $$CZ(\gamma_\pm) = 2k_\pm + j_\pm, \; j_\pm =
-1,0,1,2\;.$$ Put $h=j_+ -j_-$. Thus the index formula yields
\begin{equation}\label{eq-index}
2(k_+-k_-) + h +4l_+ =1\;.
\end{equation}

Consider a configuration of the form
$$\xymatrix {\gamma_+ \ar@{~>}[r]^v & \alpha \ar[r]^u &  \beta
\ar@{~>}[r]^w & \gamma_- }\;.$$ Denote by $\Delta$ the degree of the
projection of $u$ to $\Sigma$. Since the Chern class of $\nu$ equals
$2$ we have
\begin{equation}\label{eq-RR}
(k_+ + l_+) - (k_-+l_-) = 2\Delta\;.
\end{equation}

It follows from \eqref{eq-index} and \eqref{eq-RR} that
$$l_+ + l_- = (1-h)/2-2\Delta\;.$$
Since $l_+ \geq 1$, $\Delta \geq 0$ and $h$ is an integer from
$[-3;3]$ we conclude that $\Delta=0$. Thus our holomorphic curve
lies in the single fiber of the bundle $\nu$.

We claim that $h \neq -3$. Indeed otherwise we have the connecting
trajectory of the form
\begin{equation}\label{eq-mM}
\xymatrix {\check{m}_{k_+}  \ar@{~>}[r]^v & \alpha \ar[r]^u & \beta
\ar@{~>}[r]^w & \hat{M}_{k_-} } \;. \end{equation} Since
$\check{m}_{k_+}$ is the point of minimum of $f_{k_+}$, $v$ is the
constant trajectory. Since $\hat{M}_{k_-}$ is the point of maximum
of $f_{k_-}$, $w$ is the constant trajectory. Since $u$ lies in the
single fiber, the set of limit points of $u(z)/|u(z)|$ as $z \to
\infty$ lie on a circle which is the fiber of $Z_{k_{-}} =\R P^3$
over $\pi^{k_+}(\check{m}_{k_+})$. Generically, as the dimension
count shows, this circle does not pass through $\hat{M}_{k_-}$, and
hence configuration \eqref{eq-mM} does not exist. The claim follows.

Since $h \neq -3$, we get that
$$h=-1, l_+=1, l_- =0\;.$$
This readily yields that $d_l \gamma_+ = 0$ for $l \geq 2$ and the
only possible non-trivial matrix coefficients could be  (with
$k_+:=k$) $(d_1 \check{m}_k,\hat{m}_{k+1})$, $(d_1
\check{M_k},\hat{M}_{k+1})$ and $(d_1\hat{m}_k,\check{M}_{k+1})$. We
claim that $$(d_1 \check{m}_k,\hat{m}_{k+1})=(d_1
\check{M_k},\hat{M}_{k+1})=1\;.$$

Let us present a calculation (modulo orientations) for the
coefficient $(d_1 \check{m}_k,\hat{m}_{k+1})$ (the calculation for
$(d_1 \check{M_k},\hat{M}_{k+1})$ is analogous). Since $\Delta=0$
and $l_+=1$, we work with holomorphic maps $u$ of the cylinder
$P_{\{b\}}$ punctured at a point $b \in \C \setminus 0$ lying in the
single fiber of $\nu$. Choose functions $f_k$ and $f_{k+1}$ on $Z_k$
and $Z_{k+1}$ respectively so that
$$\pi^k(\check{m}_k)=\pi^{k+1}(\hat{m}_{k+1}) := m \in \Sigma$$ and so that the  circle
$(\pi^{k+1})^{-1}(m) \subset Z_{k+1}$ forms the unstable manifold
$\mathcal{U}$ of $\hat{m}_{k+1}$. Identify the fiber of the line
bundle $\nu$ over $m$ with $\C$. Recall that we identified each
$Z_j$ with the unit circle bundle of $\nu$. Assume that
$\check{m}_k$ corresponds to the point $-1 \in \C$ and
$\hat{m}_{k+1}$ corresponds to the point $1 \in \C$. Since
$\check{m}_k$ is the minimum point of $f_k$, the only gradient
trajectories exiting $\check{m}_k$ are the constant ones. Thus we
are counting configurations of the form
\begin{equation}\label{eq-mm}
\xymatrix {\check{m}_{k} \ar[r]^u &  \beta \ar@{~>}[r]^w &
\hat{m}_{k+1} } \;. \end{equation} Note that the point $A_\beta$
lies both on the circle $\mathcal{U}= (\pi^{k+1})^{-1}(m)$ (since
the image of $u$ is contained in the single fiber) and on the stable
manifold of $\hat{m}_{k+1}$. These two submanifolds intersect
transversally at a single point, $\hat{m}_{k+1}$. In particular, $w$
is constant. Therefore it suffices to show that the holomorphic map
$u$ is unique up to a reparameterization and up to multiplication by
constants (these symmetries should be taken into account when one
passes to the moduli space).

The general form of $u$ is $u(z) = \lambda z^k (z-b)$ with the
asymptotic conditions
$$\text{Arg}(u(t)) \to 0 \;,\; t \in \R_+, t \to +\infty$$
and
$$\text{Arg}(u(t)) \to \pi \;,\; t \in \R_+, t \to 0\;.$$
This readily yields $\lambda, b \in \R_+$. The change of variables
$z \to cz$ with $c \in \R_+$ takes $u$ to the form $u(z) = \lambda
c^{k+1}z^k(z-b/c)$. Thus $u$, up to a reparameterization and up to
multiplication by constants coincides with $z^k(z-1)$. Thus we have
the unique configuration of the form \eqref{eq-mm}, which completes
the calculation.

\medskip

Finally, we claim that $(d_1\hat{m}_k,\check{M}_{k+1})=0$. Indeed,
assume that $d_1\hat{m}_k = n\check{M}_{k+1}$. Observe that since
$d^2=0$ we have that $d_1d_0+d_0d_1 =0$. Thus (in view of the
explicit formulas for the unperturbed differential)
$$d_1d_0\hat{m}_k + d_0d_1\hat{m}_k = 0 + n
d_0\check{M}_{k+1}=0\;.$$ Since $d_0\check{M}_{k+1}\neq 0$ we
conclude that $n=0$ as claimed.

\medskip

2) Now we turn to calculation of $d_l x$ where $x \in \{x_0,x_2\}$
and $l \geq 1$. Let us observe  that  if  $(d_lx,\gamma)\neq 0$ then
the orbit $\gamma$ must have an odd grading, and hence
$\gamma=\check m_k$, or $\gamma=\check M_k$.
 But in this case  if there exists a gradient  trajectory $c$ connecting an orbit $\wt \gamma$ with $\gamma$, and  if
$\wt\gamma_\alpha$ differs  from $\wt\gamma$    by a
reparameterization $s\mapsto se^{i\alpha}$,
 then for almost all values $\alpha$ there exists a gradient trajectory $c_\alpha$ connecting  $\wt\gamma_\alpha $
    with $\gamma$.
  This implies that there  are no rigid configurations $(g,u,c)$ (see \eqref{eq-guc} above) which may contribute to
   $(d_lx,\gamma)$, and hence $(d_lx,\gamma)=0$. Indeed, any such configuration
    belongs to a family $(g,u\circ r_\alpha,c_\alpha)$, where $r_\alpha:\C\setminus 0\to\C\setminus 0$ is
     the rotation $z\mapsto ze^{i\alpha}$.

 This  finishes off the proof of the lemma. \qed

\medskip\noindent{\bf Outline of the proof of Lemma \ref{lem-folk}:}
First we discuss coefficients of the form
$(d_0 \gamma_+,\gamma_-)$ with $l_+ =0$,
 $$\gamma_\pm \in \{\check{m}_{k_\pm},
\hat{m}_{k_\pm}, \check{M}_{k_\pm} , \hat{M}_{k_\pm}\}\;.$$ We argue
as in Step 1 of the proof of Lemma \ref{lem-2}, keeping the same
notations and taking into account that $l_+ =0$ This yields three
possibilities:
\begin{itemize}
\item[{(i)}] $h=-3, \Delta=0, l_- =2$;
\item[{(ii)}] $h=-3, \Delta=1, l_-=0$;
\item[{(iii)}] $h=-1, \Delta=0, l_-=1$.
\end{itemize}
Case (i) is ruled out exactly as in the proof of Lemma \ref{lem-2}.
Case (ii) yields $(d_0 \check{m}_k,\hat{M}_{k-2}) =2$,
case (iii) yields
$$(d_0 \check{m}_k, \hat{m}_{k-1})= (d_0 \check{M}_k, \hat{M}_{k-1}) =2\;,$$
while all other coefficients vanish.

Further, we analyze the matrix coefficients involving $x_0$ and $x_2$.
Equality $(d_0\check{m}_2,x_2) = 2$ follows from the count of degree $1$ (properly parameterized) sections of the bundle
$\nu$ passing through a given generic point and having a single zero of order two
at the point $\pi^2(\check{m}_2) \in \Sigma$.
Equality
$(d_0\check{M}_1,x_2) = 2$ corresponds to the fact that there are exactly two rigid spheres
$S^2 \times \text{point}$ and $\text{point} \times S^2$ in $W$ passing through $x_2$.

The only tricky remaining coefficient is $(d_0 \check{m}_1,x_0)=0$:
Seemingly, there are two rigid spheres in $W$ asymptotic to
$\check{m}_1$ which may contribute to this coefficient. We claim
that in fact a cancelation happens. To see this, recall that by
\eqref{eq-loops} and \cite{Cohen} $H_1(B,d_0) = \Z$. If $(d_0
\check{m}_1,x_0) \neq 0$, we would get that $H_1(B,d_0) = 0$, and
thus arrive at a contradiction. \qed

 \bigskip \noindent {\bf Acknowledgements.} We thank Luis Diogo, Misha Entov, Sam Lisi, Dusa
McDuff, Isidora Milin, Yasha Savelyev and Frol Zapolsky for numerous
useful discussions. Preliminary results of this note were announced
at the PRIMA congress in Sydney in July 2009. The second named
author thanks the Simons Foundation for sponsoring his stay at MSRI,
Berkeley, where a part of this paper has been written, and MSRI for
hospitality and a stimulating research atmosphere.

\bibliographystyle{alpha}

\bigskip

\noindent
\begin{tabular}{ll}
Yakov Eliashberg & Leonid Polterovich\\
Stanford University &  University of Chicago and Tel Aviv University\\
eliash@math.stanford.edu & polterov@runbox.com \\
\end{tabular}

\end{document}